\documentclass[11pt,reqno]{amsart}
\usepackage[T1]{fontenc}
\usepackage{lmodern}
\usepackage{amsmath,amssymb,amsthm,mathtools}
\usepackage[expansion=false]{microtype}
\usepackage[a4paper,margin=30mm]{geometry}
\usepackage{enumitem}
\usepackage[hypertexnames=false,colorlinks=true,linkcolor=blue,citecolor=blue,urlcolor=blue]{hyperref}
\hypersetup{pdftitle={Finite identity bases for flat semirings of linear words},pdfauthor={Aifa Wang}}

\newtheorem{theorem}{Theorem}[section]
\newtheorem{proposition}[theorem]{Proposition}
\newtheorem{lemma}[theorem]{Lemma}
\newtheorem{corollary}[theorem]{Corollary}
\theoremstyle{definition}
\newtheorem{definition}[theorem]{Definition}

\theoremstyle{remark}
\newtheorem{remark}[theorem]{Remark}
\numberwithin{equation}{section}
\newcommand{\V}{\operatorname{Var}}

\newcommand{\supp}{\operatorname{var}}
\newcommand{\NN}{\mathbb N_0}
\newcommand{\eps}{\varepsilon}
\newcommand{\B}{\mathcal B}
\newcommand{\C}{\mathcal C}
\newcommand{\Ainf}{A_{\infty}}
\newcommand{\der}[2]{#1\vdash #2}

\title[Flat semirings of linear words]{Finite identity bases for flat semirings of linear words}
\author{Aifa Wang}
\address{School of Mathematical Sciences, Chongqing University of Technology,\newline Chongqing, 400054, P.R. China}
\email{wangaf@cqut.edu.cn}
\thanks{Corresponding author: Aifa Wang.}
\author{Lili Wang}
\address{School of Mathematical Sciences, Chongqing University of Technology,\newline Chongqing, 400054, P.R. China}
\thanks{This work was supported by the National Natural Science Foundation of China (12371024), the Science and Technology Research Program of Chongqing Municipal Education Commission (KJZD-K2024011102) and the Chongqing Natural Science Foundation Innovation and Development Joint Fund (Municipal Education Commission) (CSTB2025NSCQ-LZX0067).}
\subjclass[2020]{Primary 16Y60; Secondary 08B05, 20M07}
\keywords{Flat semiring, linear word, finite identity basis, interval semiring, endpoint graph}
\date{}

\begin{document}
\begin{abstract}
For a set $W$ of nonempty words, let $S(W)$ be the flat semiring formed by the nonempty factors of words in $W$, together with an absorbing zero. We prove that $S(W)$ has a finite identity basis whenever every word in $W$ is linear, with no restriction on the size of $W$ or on the lengths of its words. In the bounded case its variety is generated by the interval semiring $A_m\cong S(a_1\cdots a_m)$, where $m$ is the maximum word length and the letters $a_i$ are distinct. In the unbounded case its variety is generated by the interval semiring on all finite intervals of the nonnegative integers. We give explicit finite bases in both cases. The proofs encode nonzero polynomial evaluations by endpoint graphs and derive the required graph identifications using finitely many splicing identities. In particular, the eleven-element semiring $S(abcd)$ is finitely based, providing a counterexample to the length-bound conjectures for $S(W)$ proposed by Gao, Ren and Zhao.
\end{abstract}
\maketitle

\section{Introduction}

An \emph{additively idempotent semiring}, or \emph{ai-semiring}, is an algebra $(S,+,\cdot)$ in which addition is associative, commutative and idempotent, multiplication is associative, and both distributive laws hold. No additive or multiplicative identity is included in this signature. A semiring is \emph{flat} if it has a multiplicative zero $0$ and
\[
 x+x=x,\qquad x+y=0\quad\text{whenever }x\ne y.
\]
Thus $0$ is also the greatest element of the additive semilattice; it is not an additive identity.

The finite basis problem asks whether all identities of an algebra follow from finitely many identities. Flat semirings provide several classes in which the finite basis problem differs substantially from the corresponding problem for the multiplicative reduct. Jackson, Ren and Zhao~\cite{JRZ2022} established nonfinite basis results for flat semirings whose semigroup reducts are finitely based, and proposed the systematic study of the semirings $S(W)$ associated with sets of words. Related questions concerning limit varieties were investigated by Ren, Jackson, Zhao and Lei~\cite{RJZL2023}.

Let $X^+$ be the free semigroup on an alphabet $X$. For a nonempty set $W\subseteq X^+$, write $F(W)$ for the set of all nonempty contiguous factors of words in $W$. The algebra $S(W)$ has underlying set $F(W)\cup\{0\}$, flat addition, and multiplication
\[
u\cdot v=\begin{cases}uv,&uv\in F(W),\\0,&uv\notin F(W).
\end{cases}
\]
Products involving $0$ are zero. We write $S(w)$ for $S(\{w\})$. A word is \emph{linear} if no letter occurs in it more than once. This condition is stronger than avoidance of factors of the form $u^2$.

Gao, Ren and Zhao~\cite{GRZ2026} proved that $S(W)$ generates a Cross variety whenever all words in $W$ have length at most three. They also proved nonfinite basis results when $W$ contains sufficiently large powers. Their Conjectures~6.1 and~6.2 assert that $S(W)$ is finitely based exactly when all words in $W$ have length at most three, for finite nonempty $W$ and arbitrary nonempty $W$, respectively. The linear case has different behavior.

\begin{theorem}\label{thm:main}
Let $W$ be a nonempty set of nonempty linear words. Then $S(W)$ is finitely based in the signature $(+,\cdot)$.
\end{theorem}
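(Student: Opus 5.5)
The plan has two stages. First I reduce $S(W)$ to a single generator of its variety; then I give a finite basis for that generator.

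\emph{Reduction.} If every word of $W$ has length at most $m$ and some word has length exactly $m$, I would show that $\V S(W)=\V A_m$, where $A_m\cong S(a_1\cdots a_m)$ is the interval semiring on the subintervals of $\{1,\dots,m\}$. If the lengths are unbounded, I would show $\V S(W)=\V \Ainf$, with $\Ainf$ the interval semiring on all finite intervals of $\NN$.

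Both inclusions go through substructures and homomorphisms.
\begin{itemize}
\item Each $S(w)$ with $w$ linear of length $k$ is isomorphic to $A_k$. A factor of $w$ is determined by its endpoints because $w$ is linear, and a product of two factors is nonzero exactly when the intervals are adjacent.
\item $A_k$ embeds in $A_m$ for $k\le m$, and every $A_k$ embeds in $\Ainf$. This gives $S(W)\in\V A_m$ or $S(W)\in \V\Ainf$.
\item $S(W)$ itself may not embed in $A_m$, since different words of $W$ can share letters. So instead I would check that every identity failing in $S(W)$ already fails in some $S(w)$ with $w\in W$. A nonzero evaluation of a polynomial in $S(W)$ assigns factors whose nonzero products are factors of one word, and the evaluation can be relocated into a single word. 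Alternatively, $S(W)$ is a subdirect-type image of $\prod_w S(w)$ after collapsing, and I would verify this for flat semirings.
\item Conversely, $A_m\cong S(w)$ is a subalgebra of $S(W)$ up to the zero. For unbounded lengths, $\V\Ainf$ is generated by the $A_k$, since any failing identity uses finitely many intervals.
\end{itemize}

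\emph{Basis for $A_m$ and $\Ainf$.} Here I would give explicit finite sets $\Sigma_m$ and $\Sigma_\infty$. Each set contains the flat ai-semiring axioms together with a few splicing identities, for instance
\[
xyz+xy'z \approx \dots
\]
expressing that if two paths share their endpoints they can be exchanged.

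The completeness argument uses endpoint graphs. Evaluate a monomial $u=x_1\cdots x_n$ into intervals, and assign each variable $x$ a pair $(\ell_x,r_x)$ of endpoints. Then $u$ is nonzero exactly when $r_{x_i}=\ell_{x_{i+1}}$ for consecutive letters. So a polynomial $u_1+\dots+u_k$ is nonzero exactly when all the $u_j$ define walks in one directed graph whose vertices are endpoint classes. Those walks must run from the same start vertex to the same end vertex, with repetitions constrained by linearity (intervals strictly increase), and the total length must be at most $m$ in the bounded case.

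An identity $p\approx q$ then holds iff $p$ and $q$ induce the same identification pattern on endpoints, the same source and sink, and the same length constraints. I would then show that any polynomial can be brought by $\Sigma$ into a normal form determined by the quotient graph. Splicing identities let me swap subwalks between equal vertices. An identity such as $x^2\approx$ a fixed zero term kills nonacyclic patterns. For $A_m$, one identity kills all monomials of length greater than $m$ in distinct variables: $x_1\cdots x_{m+1}\approx$ a zero term, modulo the flat axioms.

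\emph{Main obstacle.} The hard step is showing that finitely many splicing identities generate every needed graph identification, uniformly in the number of summands and the length of the walks. I would first try induction on the number of summands. Two walks that agree at a common vertex can be spliced, and iterating this builds the full transitive closure of vertex identifications. What needs care is that the splicing identities never enlarge the set of nonzero evaluations, and that in the bounded case the length cap interacts correctly with the splicing.
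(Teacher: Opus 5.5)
\textbf{There is a genuine gap.} Your first stage is the paper's argument: Lemma~\ref{lem:join}, $S(w)\cong A_{|w|}$, and the interval embeddings. There is one slip. $S(w)$ is in general not a subalgebra of $S(W)$: for $W=\{ab,ba\}$ one has $b\cdot a=ba\notin F(ab)$. It is, however, the homomorphic image $\pi_w(S(W))$, which suffices.

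The gap is in the second stage, which is the real content of the theorem. You neither specify the bases nor prove completeness, and the ingredients you name cannot yield a basis for $A_m$.

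\emph{The bounded case.} The only identity in your $\Sigma_m$ special to $A_m$ is $x_1\cdots x_{m+1}\approx0$. The rest (ai-axioms, subwalk exchanges, $x^2\approx0$) is meant to hold in $\Ainf$ as well. But identities of $\Ainf$ together with \eqref{eq:N} do not axiomatize $A_m$ for $m\ge2$. For $m=2$, take the subsemiring $\{0,[0,1],[1,3],[0,2],[2,3],[0,3]\}$ of $\Ainf$.
\begin{itemize}
\item It satisfies every identity of $\Ainf$.
\item It satisfies $x_1x_2x_3\approx0$: each nonzero product of two of its elements is $[0,3]$, which annihilates everything.
\item The assignment $x_1\mapsto[0,1]$, $x_2\mapsto[1,3]$, $y_1\mapsto[0,2]$, $y_2\mapsto[2,3]$ refutes $H_2$, which holds in $A_2$.
\end{itemize}
For general $m\ge2$, use the subintervals of two chains $0=\alpha_0<\dots<\alpha_m=N$ and $0=\beta_0<\dots<\beta_m=N$ with disjoint interior points.

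The missing idea is Lemma~\ref{lem:forced}. With values in $\{0,\dots,m\}$, a strictly increasing map is forced along every $s$--$t$ path of length $m$. So distinct critical vertices of equal rank coincide under every nonzero evaluation. Your criterion (an identity holds iff both sides induce the same endpoint identifications) is therefore false for $A_m$: the two sides of \eqref{eq:H} have non-isomorphic endpoint graphs. An identity such as \eqref{eq:H} is needed to realise these identifications syntactically (Lemma~\ref{lem:mixing}).

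\emph{The splicing sketch.} Two further steps fail as stated.

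First, you never treat boundary conflicts: a variable occurring first in one monomial and non-first in another (or last and non-last), as in $x+xy$ or $xy+zx$. Then $s$ or $t$ coincides with an internal vertex, and subwalk exchanges do not kill such polynomials, although they vanish in every $A_m$ and in $\Ainf$. A model shows this. Take the power semiring of the multiplicative reduct of $A_2$: finite nonempty sets of nonzero elements, with every set containing a zero product collapsed to $0$. It satisfies the ai-axioms, \eqref{eq:Z}, \eqref{eq:M}, \eqref{eq:R}, \eqref{eq:J}, $N_2$ and $H_2$. Yet $x\mapsto\{[0,1]\}$, $y\mapsto\{[1,2]\}$ gives $x+xy=\{[0,1],[0,2]\}\ne0$. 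The paper disposes of these cases first, by \eqref{eq:L} and \eqref{eq:E}. Only then does it have $s\ne t$, with no edge entering $s$ or leaving $t$, which Steps~2 and~3 of Lemma~\ref{lem:splicing} rely on.

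Second, iterating local exchanges at a shared adjacent variable does not produce the transitive closure of vertex identifications. Such exchanges add nothing to $ab+cb+cd$, yet the walk $ad$ must be derived. This is what \eqref{eq:R} supplies in Step~2 of Lemma~\ref{lem:splicing}. It is exactly what makes the argument uniform in the number of summands and the length of walks, which you identified as the main obstacle but left open.
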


The result holds whether or not the lengths of words in $W$ are bounded. To state the more precise conclusion, let
\[
A_m=S(a_1\cdots a_m)\qquad(m\ge1),
\]
where $a_1,\ldots,a_m$ are distinct letters, and let $\Ainf$ be the flat semiring of finite nonempty intervals of $\NN$, defined in Section~\ref{sec:bases}. We prove
\begin{equation}\label{eq:classification}
\V(S(W))=
\begin{cases}
\V(A_m),&m=\max\{|w|:w\in W\}<\infty,\\
\V(\Ainf),&\{|w|:w\in W\}\text{ is unbounded}.
\end{cases}
\end{equation}
Explicit finite bases for these varieties are given below. Consequently, already $W=\{abcd\}$ contradicts both conjectures in~\cite{GRZ2026}.

The proofs are equational. We associate a finite directed graph with each polynomial. A nonzero evaluation in an interval semiring is exactly a strictly increasing assignment of values to the graph vertices. Two splicing identities generate every source-to-sink walk. For $A_m$, one additional identity accounts for the equalities forced by the finite chain of possible endpoint values. For $\Ainf$, no such additional identifications occur. The optional computations described in the appendix are not used in the proofs.

\section{Interval semirings and finite bases}\label{sec:bases}

\subsection{Interval semirings and notation}

For $m\ge1$, identify the factor $a_{i+1}\cdots a_j$ with the symbol $[i,j]$, where $0\le i<j\le m$. Then
\begin{equation}\label{eq:interval}
A_m=\{0\}\cup\{[i,j]:0\le i<j\le m\},\qquad
[i,j][k,l]=\begin{cases}[i,l],&j=k,\\0,&j\ne k.
\end{cases}
\end{equation}
Addition is flat. The same operations define $\Ainf$ on
\[
\{0\}\cup\{[i,j]:i,j\in\NN,\ i<j\}.
\]
These are ai-semirings. Indeed, interval concatenation is associative and is left and right cancellative whenever the products in question are nonzero; these properties give the distributive laws for flat addition. Notice that
\[
|A_m|=1+\binom{m+1}{2}.
\]

We first work in the expanded signature $(+,\cdot,0)$, with absorbing zero. Section~\ref{sec:zero} removes this constant. Modulo the ai-semiring axioms, every term without $0$ is a finite nonempty sum of nonempty words. We identify such a sum with its set $P$ of distinct monomials, and write $\supp(P)$ for the set of its variables. By the zero laws below, every term involving $0$ as a monomial reduces to $0$.

It suffices to derive identities of the form $P\approx P+q$, where $q$ is a word, together with identities $P\approx0$. In fact, $P\approx Q$ is equivalent, over the ai-semiring axioms, to the two finite families
\[
P\approx P+q\ (q\in Q),\qquad Q\approx Q+p\ (p\in P).
\]

\subsection{Finite identity sets}

Let $\C$ consist of the six ai-semiring axioms, the three identities
\begin{equation}\label{eq:Z}\tag{Z}
0+x\approx0,\qquad 0x\approx0,\qquad x0\approx0,
\end{equation}
and the following identities:
\begin{align}
uxv+pxq&\approx uxv+pxq+uxq,\label{eq:M}\tag{M}\\
uv+pv+pq&\approx uv+pv+pq+uq,\label{eq:R}\tag{R}\\
xu+vxz&\approx0,\label{eq:L}\tag{L}\\
ux+vxz&\approx0.\label{eq:E}\tag{E}
\end{align}
The following convention makes these finite sets of identities precise:
\begin{itemize}[leftmargin=*,itemsep=2pt]
\item In \eqref{eq:M}, each of $u,v,p,q$ is either a distinct variable, different from $x$, or is omitted, independently. The common variable $x$ is never omitted. This gives sixteen identities.
\item In \eqref{eq:R}, none of $u,v,p,q$ is omitted.
\item In \eqref{eq:L}, $v$ is not omitted, while $u,z$ may be omitted independently. This gives four identities.
\item In \eqref{eq:E}, $z$ is not omitted, while $u,v$ may be omitted independently. This gives four identities.
\end{itemize}
Omission means deletion from the written word, not evaluation at a unit. Every monomial in the resulting identities is nonempty. In subsequent derivations, a variable standing for a context may be replaced by any nonempty word.

For $m\ge1$, define $\B_m$ by adjoining to $\C$ the identities
\begin{align}
x_1\cdots x_{m+1}&\approx0,\label{eq:N}\tag{$N_m$}\\
x_1\cdots x_m+y_1\cdots y_m
&\approx (x_1+y_1)\cdots(x_m+y_m).\label{eq:H}\tag{$H_m$}
\end{align}
Define $\B_\infty$ by adjoining to $\C$, instead, the identities
\begin{equation}\label{eq:J}\tag{J}
x^2\approx0,\qquad xyx\approx0.
\end{equation}
In particular, each $\B_m$ and $\B_\infty$ is a finite set.

\begin{theorem}\label{thm:bases}
In the signature $(+,\cdot,0)$, $\B_m$ is an identity basis for $A_m$ for every $m\ge1$, and $\B_\infty$ is an identity basis for $\Ainf$.
\end{theorem}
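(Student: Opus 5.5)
The plan is to prove soundness first and completeness second. For soundness, I would check each identity of $\B_m$ (respectively $\B_\infty$) directly in $A_m$ (respectively $\Ainf$) using the interval description \eqref{eq:interval}. \eqref{eq:M}: if $uxv$ and $pxq$ both evaluate to the same nonzero interval, then $x$ has a fixed value $[k,l]$, so $u$ ends at $k$ and $q$ starts at $l$, and hence $uxq$ is the same interval. \eqref{eq:R} is the same splicing argument performed at a common endpoint instead of a common variable. \eqref{eq:L} and \eqref{eq:E}: equality of the two sums forces an endpoint of $x$ to coincide with a strictly smaller or larger endpoint. \eqref{eq:J}, \eqref{eq:N} and \eqref{eq:H} follow from strict monotonicity and from the bound $m$ on endpoint values; for \eqref{eq:H}, both sides are nonzero only when every $x_i=y_i=[i-1,i]$.

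For completeness, I attach to each polynomial $P$ (a set of words) its \emph{endpoint graph} $G(P)$. Each variable $x\in\supp(P)$ contributes two vertices $x^-$ and $x^+$ and an edge $x^-\to x^+$. Every monomial $x_1\cdots x_k\in P$ identifies $x_i^+$ with $x_{i+1}^-$, and all first-letter left endpoints are identified to a source $s$, all last-letter right endpoints to a sink $t$. I take the equivalence closure of these identifications. A nonzero evaluation of $P$ in $\Ainf$ is then exactly a map from vertices to $\NN$ that strictly increases along edges, and in $A_m$ the same with values in $\{0,\dots,m\}$. Consequently:
\begin{itemize}[leftmargin=*]
\item $P\approx0$ holds in $\Ainf$ iff $G(P)$ has a directed cycle.
\item $P\approx P+q$ holds iff $q$ is an $s\to t$ walk whose consecutive edges meet at identified vertices.
\end{itemize}
For $A_m$, a cycle or a path of length $>m$ gives $0$. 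Otherwise the only extra walks arise from the forced equalities, namely vertices whose longest-path distances from $s$ and to $t$ sum to exactly $m$.

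The derivation proceeds in three steps.
\begin{enumerate}[leftmargin=*]
\item Show by induction on the length of the equivalence chain that \eqref{eq:M} and \eqref{eq:R} let us add to $P$ every $s\to t$ walk. Each identification step is either a shared variable, handled by \eqref{eq:M} with contexts omitted as needed, or a shared cut point between two factorisations, handled by \eqref{eq:R}. After this closure, $P$ is saturated.
\item Show that a cycle in the saturated graph produces either a monomial containing $x^2$ or $xyx$, so \eqref{eq:J} gives $0$, or the configurations of \eqref{eq:L} and \eqref{eq:E}, where a source or sink vertex also has an incoming or outgoing edge. This settles $\B_\infty$.
\item For $\B_m$, use \eqref{eq:N} to kill any walk of length $>m$, and then handle the forced identifications. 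If a saturated $P$ contains walks of length exactly $m$, apply \eqref{eq:H} to two such walks. Its right-hand side distributes into all mixed words, which identifies the $i$th cut points of the two walks. Combined with step 1, this recovers every vertex identification that is forced in $A_m$.
\end{enumerate}

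The main obstacle I expect is step 3: proving that every equality forced by the finite chain of endpoint values is generated by instances of \eqref{eq:H} applied to length-$m$ walks together with splicing. My first attempt would characterise the forced equalities as follows. In an acyclic graph whose longest $s\to t$ path has length $\le m$, a vertex $v$ receives a forced value iff $d(s,v)+d(v,t)=m$, where $d$ denotes longest-path distance, and then that value is $d(s,v)$. Two vertices are forced equal iff both lie on maximum-length walks at the same depth. Each such vertex lies on a walk of length $m$ already present in the saturated $P$, so \eqref{eq:H} can identify the vertices at depth $i$ across any two such walks. Finally, I would check that the resulting graph is realised by an assignment separating all remaining vertices (for instance, placing the non-forced vertices using distances with slack), so that no further identities hold beyond those derived.
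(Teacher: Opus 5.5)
Your architecture is the paper's: the same endpoint graph, \eqref{eq:M} and \eqref{eq:R} for splicing, \eqref{eq:N} and \eqref{eq:J} in the zero cases, \eqref{eq:H} on pairs of length-$m$ walks, and forced equality characterized by $r(v)+d(v)=m$ together with equal rank. However, two steps fail as you state them.

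First, your Step~1 asserts that \eqref{eq:M} and \eqref{eq:R} add every $s$--$t$ walk of $G(P)$ for arbitrary $P$. This is false when some variable occurs first in one monomial and not first in another (or dually for last). Take $P=xy+zx$. Its graph has a loop $z$ at $s$, an edge $x$ from $s$ to $t$ and a loop $y$ at $t$, so $zzx$ is an $s$--$t$ walk. Yet the set $\{x,xy,zx,zxy\}$ contains $P$ and is closed under all instances of \eqref{eq:M} and \eqref{eq:R} in any context, so $P\approx P+zzx$ cannot be obtained from these identities. The vertex $\rho_z=\lambda_x$ (the cut after $z$ in $zx$) is tied to the beginning of $zx$ only through $s$, via $\lambda_x=s$ from $xy$. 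Splicing there would need an empty context in \eqref{eq:R}, or a shared adjacent variable in \eqref{eq:M}, and neither is available. The paper therefore disposes of boundary conflicts \emph{before} splicing. In that case \eqref{eq:L} or \eqref{eq:E} gives $P\approx0$ at once. When there is no conflict, the class of $s$ consists only of $s$ and the left endpoints of first letters (dually for $t$), so chains between internal cuts never pass through $s$ or $t$, and your induction works. Your Step~2 mentions the \eqref{eq:L}/\eqref{eq:E} configurations, but only as something detected in the ``saturated'' graph, and that saturation is exactly what cannot be reached here. The issue arises only in the zero cases, for \eqref{eq:N} as well as for \eqref{eq:J}. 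If $P$ is not identically zero, $G(P)$ is acyclic and no conflict can occur.

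Second, your closing check asks for a single assignment separating all non-forced vertices, and for $A_m$ no such assignment exists in general. In $A_3$ with $P=xy+uvw$, every admissible map sends $\rho_u,\rho_v$ to $1,2$, while $\rho_x=\lambda_y$ must take the value $1$ or $2$. So every admissible map identifies $\rho_x$ with one of $\rho_u,\rho_v$, although $\rho_x$ is not critical. What the completeness argument needs is pairwise separation, and that suffices, because each equality in \eqref{eq:qconditions} concerns a single pair. If $r(u)\ne r(v)$, the map $f=r$ separates them. If $r(u)=r(v)=k$ and $u$ is not critical, take $f(z)=\max\{r(z),k+1+\ell(u,z)\}$ for $z$ reachable from $u$ and $f(z)=r(z)$ otherwise, as in Lemma~\ref{lem:forced}. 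For $\Ainf$ a single injective topological numbering does separate all distinct vertices, so your version is correct there.
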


We prove validity first and completeness in Sections~\ref{sec:finite} and~\ref{sec:infinite}.

\begin{lemma}\label{lem:sound}
Every $A_m$ and $\Ainf$ satisfies $\C$. Moreover, $A_m\models\B_m$ and $\Ainf\models\B_\infty$.
\end{lemma}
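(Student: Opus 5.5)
Every identity here is checked directly under an arbitrary assignment into $A_m$ or $\Ainf$, using the description \eqref{eq:interval}. The basic observation is that a word $w=x_1\cdots x_k$ evaluates to a nonzero element exactly when the intervals assigned to consecutive letters abut. If $x_t\mapsto[i_t,j_t]$, this means $j_t=i_{t+1}$ for every $t$, and then $w\mapsto[i_1,j_k]$. A flat sum $P$ is nonzero exactly when every monomial of $P$ is nonzero and all of them take the same value. The ai-semiring axioms and \eqref{eq:Z} have already been noted to hold. For an identity $P\approx P+q$ it therefore suffices to show that whenever $P$ is nonzero, $q$ is nonzero with the common value of $P$. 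An identity $P\approx0$ holds provided no assignment makes $P$ nonzero.

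For \eqref{eq:M}, assume $uxv$ and $pxq$ are nonzero with common value $[s,t]$, and let $x\mapsto[i,j]$. Left and right cancellation fix the meaning of each context. The left context $u$ (when present) is an interval ending at $i$, and $uxv$ starts at $s$, so $u$ starts at $s$ (or $i=s$ if $u$ is omitted). The same applies to $p$. Symmetrically, $v$ and $q$ both end at $t$. Hence $uxq$ is nonzero with value $[s,t]$. The case analysis over omissions just replaces "starts at $s$" by "$i=s$".

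For \eqref{eq:R}, write $u=[a,b]$, $v=[b,c]$, $p=[d,b']$, $q=[b',e]$. Since $pv\ne0$ we get $b'=b$. Equal values give $a=d$ from $uv=pv$, and $c=e$ from $pv=pq$. Thus $uq=[a,c]$ is nonzero with the same value.

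For \eqref{eq:L}, assume $xu$ and $vxz$ are nonzero and equal. Then both start at the left endpoint of the value, which is the left endpoint of $x$ (the first factor of $xu$) and also the left endpoint of $v$. But $v$ ends at the left endpoint of $x$, so $v$ would have length $0$, which is a contradiction. The case of \eqref{eq:E} is the mirror argument using right endpoints and the factor $z$. When $u$ is omitted in \eqref{eq:L}, the monomial is just $x$, and the same argument applies.

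For $\B_m$, note first that a nonzero word with $m+1$ letters has value of length at least $m+1>m$, so \eqref{eq:N} holds in $A_m$. The main point is \eqref{eq:H}. Suppose either side is nonzero. A nonzero product of $m$ intervals inside $[0,m]$ must consist of unit intervals and equal $[0,m]$. So $x_t\mapsto[t-1,t]$ in any nonzero evaluation of $x_1\cdots x_m$, and likewise for $y_t$. If the left side is nonzero, then $x_t=y_t$ for all $t$, so each $x_t+y_t=x_t$ and the right side equals the left. If the right side is nonzero, then each $x_t+y_t$ is nonzero, forcing $x_t=y_t$, and again both sides agree. If both sides are zero there is nothing to prove.

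For $\B_\infty$, the identity $x^2\approx0$ holds because $[i,j][i,j]$ would need $j=i$. The identity $xyx\approx0$ holds because $x=[i,j]$, $y=[j,k]$ and then $k=i$ would give $i<j<k=i$. This last step, pinning down the forced unit-interval decomposition behind $(H_m)$, is the only nonroutine point; the rest is endpoint bookkeeping.
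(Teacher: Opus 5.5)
Your proposal is correct and follows essentially the same route as the paper. Both reduce each absorption identity to assignments where the left side is nonzero, then verify \eqref{eq:M}, \eqref{eq:R}, \eqref{eq:L}, \eqref{eq:E}, \eqref{eq:N} and \eqref{eq:J} by endpoint bookkeeping. The only small variation is in \eqref{eq:H}: you handle a nonzero right side directly (each flat sum $x_t+y_t$ being nonzero forces $x_t=y_t$), whereas the paper expands the right side by distributivity and notes that the expansion contains both original monomials, which amounts to the same check.
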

\begin{proof}
The ai-semiring axioms and \eqref{eq:Z} hold by construction. For an absorption identity, if the left-hand sum is zero, the right-hand sum is zero as well. Thus it is enough to consider assignments for which all summands on the left are equal and nonzero.

For \eqref{eq:M}, suppose that $uxv$ and $pxq$ evaluate to the same interval $I$, and that $x$ evaluates to $[i,j]$. Both occurrences of $x$ have these same endpoints. The left contexts, when present, are consequently the same initial part of $I$, and the right contexts are the same final part. If a context is omitted, the corresponding endpoint of $[i,j]$ is an endpoint of $I$. In all cases the mixed product $uxq$ also evaluates to $I$.

For \eqref{eq:R}, equality $uv=pv\ne0$ implies that the values of $u$ and $p$ coincide, by right cancellation of nonzero interval products. Hence $uq=pq$.

For \eqref{eq:L}, the interval assigned to $x$ starts at the left endpoint of the value of $xu$. In $vxz$, its left endpoint is strictly to the right of the left endpoint of the whole product, since $v$ is nonempty. The two products cannot be equal and nonzero. The argument for \eqref{eq:E} uses right endpoints.

In $A_m$, every nonzero factor has positive interval length, so a product of $m+1$ factors is zero. If two products of $m$ factors are equal and nonzero, their common value must be $[0,m]$, and their $i$th factors must both equal $[i-1,i]$. Thus every monomial in the expansion of the right side of \eqref{eq:H} has the same value. If the left side is zero, the expansion on the right contains both original monomials and is zero. This proves \eqref{eq:N} and \eqref{eq:H}.

Finally, in $\Ainf$ an interval cannot occur twice in a nonzero concatenation, because endpoint values strictly increase along the concatenation. This proves \eqref{eq:J}.
\end{proof}

\section{Endpoint graphs and splicing}\label{sec:splicing}

\subsection{The endpoint graph}

\begin{definition}\label{def:graph}
Let $P$ be a finite nonempty set of nonempty words. Introduce symbols $s,t$ and, for each $x\in\supp(P)$, symbols $\lambda_x,\rho_x$. For each word $x_1\cdots x_r\in P$, impose the identifications
\begin{equation}\label{eq:endpoints}
\lambda_{x_1}=s,\qquad \rho_{x_i}=\lambda_{x_{i+1}}\ (1\le i<r),\qquad \rho_{x_r}=t.
\end{equation}
The vertices of $G(P)$ are the classes of the equivalence relation generated by these identifications. For each variable $x$, there is one directed edge labelled $x$ from $\lambda_x$ to $\rho_x$. We use the endpoint symbols also for their classes.
\end{definition}

The graph may have loops and parallel edges. A \emph{walk} may repeat edges and vertices; a \emph{path} does not repeat vertices. Each monomial of $P$ labels a nonempty $s$--$t$ walk, and every vertex and edge lies on such a walk.

\begin{lemma}\label{lem:evaluation}
Nonzero evaluations of $P$ in $A_m$, restricted to $\supp(P)$, are in bijection with maps
\[
f:V(G(P))\longrightarrow\{0,\ldots,m\}
\quad\text{such that}\quad f(\lambda_x)<f(\rho_x)\quad(x\in\supp(P)).
\]
The evaluation associated with $f$ sends $x$ to $[f(\lambda_x),f(\rho_x)]$ and sends $P$ to $[f(s),f(t)]$. The corresponding statement for $\Ainf$ uses maps into $\NN$.
\end{lemma}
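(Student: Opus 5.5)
We prove the bijection by constructing maps in both directions and checking that they are mutually inverse.

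\emph{From evaluations to maps.} Fix an evaluation $\phi$ of the variables of $P$ in $A_m$ under which $P$ takes a nonzero value. Since addition is flat, $P$ is nonzero exactly when every monomial evaluates to the same nonzero interval $[a,b]$. So each variable $x$ receives an interval $\phi(x)=[i_x,j_x]$ with $i_x<j_x$. Assign the value $i_x$ to the symbol $\lambda_x$ and $j_x$ to $\rho_x$, and assign $a$ to $s$ and $b$ to $t$.

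By \eqref{eq:interval}, a product $x_1\cdots x_r$ of intervals is nonzero exactly when $j_{x_i}=i_{x_{i+1}}$ for every $i<r$. In that case its value is $[i_{x_1},j_{x_r}]$, which must equal $[a,b]$. Hence every generating identification in \eqref{eq:endpoints} is respected by this assignment of values. Therefore the assignment is constant on the equivalence classes, and it descends to a map $f:V(G(P))\to\{0,\ldots,m\}$. This $f$ satisfies $f(\lambda_x)<f(\rho_x)$ for every variable $x$.

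\emph{From maps to evaluations.} Conversely, let $f$ be a map with $f(\lambda_x)<f(\rho_x)$ for all $x\in\supp(P)$. Define $\phi(x)=[f(\lambda_x),f(\rho_x)]$; this is a legitimate element of $A_m$ because the left endpoint is smaller than the right one and both lie in $\{0,\ldots,m\}$. Take a monomial $x_1\cdots x_r$ of $P$. The identifications $\rho_{x_i}=\lambda_{x_{i+1}}$ hold in $G(P)$, so consecutive intervals share an endpoint. Iterating \eqref{eq:interval} therefore gives the value $[f(\lambda_{x_1}),f(\rho_{x_r})]=[f(s),f(t)]$. Every monomial thus has the same nonzero value, and flat addition gives $P=[f(s),f(t)]$.

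\emph{The two maps are mutually inverse.} Two points need checking.
\begin{itemize}
\item Starting from $\phi$, passing to $f$ and back recovers $\phi$ on $\supp(P)$ by definition.
\item Starting from $f$, passing to $\phi$ and back recovers $f$. This holds because every vertex of $G(P)$ is the class of some $\lambda_x$ or $\rho_x$; the classes of $s$ and $t$ contain $\lambda_{x_1}$ and $\rho_{x_r}$ for any monomial. So $f$ is determined by its values on endpoint symbols, which are read off from $\phi$.
\end{itemize}

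For $\Ainf$ the identical argument applies, with $\{0,\ldots,m\}$ replaced by $\NN$.

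The only step needing care is well-definedness of $f$ on equivalence classes. It follows because the relation is generated by the identifications in \eqref{eq:endpoints}, each of which $\phi$ respects.
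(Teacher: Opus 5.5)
Your proof is correct and follows the paper's own argument: flatness forces all monomials to share one nonzero value, the multiplication rule \eqref{eq:interval} is exactly the endpoint identifications \eqref{eq:endpoints}, and conversely every monomial evaluates to $[f(s),f(t)]$. You spell out two points the paper leaves implicit: well-definedness on equivalence classes, and the mutual-inverse check, which uses that every vertex, including $s$ and $t$, contains some $\lambda_x$ or $\rho_x$.
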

\begin{proof}
A nonzero sum in a flat semiring has all its summands equal and nonzero. The multiplication rule~\eqref{eq:interval} says exactly that the endpoints in~\eqref{eq:endpoints} agree under such an evaluation. Every occurring variable is assigned a nonzero interval, so its left endpoint is strictly smaller than its right endpoint. Conversely, these conditions make every monomial evaluate to $[f(s),f(t)]$. The endpoints determine the assigned intervals uniquely.
\end{proof}

\begin{corollary}\label{cor:zero-criterion}
The polynomial $P$ is identically zero in $A_m$ if and only if $G(P)$ has a directed cycle or has an $s$--$t$ path of length greater than $m$. It is identically zero in $\Ainf$ if and only if $G(P)$ has a directed cycle.
\end{corollary}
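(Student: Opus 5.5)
By Lemma~\ref{lem:evaluation}, $P$ is identically zero in $A_m$ exactly when no map $f\colon V(G(P))\to\{0,\ldots,m\}$ is strictly increasing along every edge, that is, satisfies $f(\lambda_x)<f(\rho_x)$ for all $x\in\supp(P)$. The same holds for $\Ainf$ with $\NN$ in place of $\{0,\ldots,m\}$. So the corollary reduces to a purely graph-theoretic statement: the finite digraph $G(P)$ admits a strictly increasing labelling into a chain of $m+1$ values (respectively into $\NN$) if and only if it is acyclic and every $s$--$t$ path has at most $m$ edges. We prove both directions.

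\emph{Necessity.} If $G(P)$ has a directed cycle $v_0\to v_1\to\cdots\to v_k=v_0$ (loops included, with $k=1$), then a strictly increasing $f$ would give $f(v_0)<f(v_0)$, which is impossible. If there is an $s$--$t$ path with $\ell>m$ edges, then $f$ strictly increases along it, so $f(t)\ge f(s)+\ell>m$, which is impossible in $\{0,\ldots,m\}$. Hence $P$ is identically zero in each of these cases. The path condition is vacuous for $\Ainf$.

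\emph{Sufficiency.} Assume $G(P)$ is acyclic. Define $f(v)$ to be the maximum number of edges in a directed walk ending at $v$. Since the graph is finite and acyclic, every walk is a path, so $f(v)$ is finite and is attained by a path. For every edge $u\to v$ we have $f(v)\ge f(u)+1$, so $f$ is strictly increasing. This already gives a nonzero evaluation in $\Ainf$.

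For $A_m$ we also need $f(v)\le m$ for every vertex $v$. This is the only step needing care, and it uses the structural remark after Definition~\ref{def:graph}: every vertex and edge lies on an $s$--$t$ walk. We first check that $s$ is the unique source in the relevant sense, namely that every vertex is reachable from $s$ and reaches $t$. Now take a longest path ending at $v$, with $f(v)$ edges. Its initial vertex $w$ is reachable from $s$; prepend an $s$--$w$ walk and append a $v$--$t$ walk. By acyclicity, the concatenation is a path from $s$ to $t$ of length at least $f(v)$, since no vertex can repeat in a walk of an acyclic graph. Therefore $f(v)\le m$ whenever all $s$--$t$ paths have length at most $m$. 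Then $f$ maps into $\{0,\ldots,m\}$ and is strictly increasing, so by Lemma~\ref{lem:evaluation} it yields a nonzero evaluation. This proves the corollary.
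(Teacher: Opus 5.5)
Your proof is correct and is essentially the paper's argument: you reduce via Lemma~\ref{lem:evaluation} to strictly increasing vertex labellings, rule these out by a cycle or an overlong $s$--$t$ path, and conversely label each vertex by a longest-path length. Your labelling (longest walk ending at $v$) coincides with the paper's (longest path from $s$ to $v$), because every vertex is reachable from $s$, and you spell out the bound $f(v)\le m$ that the paper states only briefly.
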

\begin{proof}
A directed cycle precludes a strictly increasing map. A path of length greater than $m$ precludes such a map into $\{0,\ldots,m\}$. Conversely, in a finite acyclic graph of the present form, assign to each vertex the maximum length of a path from $s$ to it. This is strictly increasing along each edge and its largest value is the maximum length of an $s$--$t$ path.
\end{proof}

\subsection{Deriving the walks of the graph}

Throughout the following argument, a derivation may use the ai-semiring axioms and \eqref{eq:Z}, as well as the indicated splicing identities. Adding a derived monomial to $P$ means deriving an identity $P\approx P+q$. Previously added monomials can be used in later steps, by associativity and idempotence of addition.

\begin{lemma}[Splicing lemma]\label{lem:splicing}
For every polynomial $P$, either $\der{\C}{P\approx0}$, or, for every nonempty $s$--$t$ walk with label $q$ in $G(P)$,
\[
\der{\C}{P\approx P+q}.
\]
\end{lemma}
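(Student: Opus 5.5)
The plan is to prove a stronger \emph{splicing invariant} about prefixes and suffixes of monomials, and then build arbitrary walks edge by edge. Call an \emph{endpoint position} in a monomial $x_1\cdots x_r$ of $P$ one of the $r+1$ cut points: $s$ before $x_1$, the cut between $x_i$ and $x_{i+1}$ (which carries the symbols $\rho_{x_i}$ and $\lambda_{x_{i+1}}$), or $t$ after $x_r$. For an endpoint symbol $e$, a \emph{prefix realized at $e$} is a word $\alpha$ such that some monomial $\alpha\beta$, already added to $P$, has its cut after $\alpha$ carrying the symbol $e$. A \emph{suffix realized at $e$} is defined dually. Here $\alpha$ is empty exactly when $e=s$ and the cut is at the start, and similarly for suffixes.

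The central claim concerns two endpoint symbols $e,e'$ in the same class of the equivalence of Definition~\ref{def:graph}. Either $\der{\C}{P\approx0}$, or for every nonempty prefix $\alpha$ realized at $e$ and every nonempty suffix $\gamma$ realized at $e'$ we have $\der{\C}{P\approx P+\alpha\gamma}$, and then the cut of $\alpha\gamma$ after $\alpha$ carries both $e$ and $e'$. I would prove this by induction on the length of a chain of generating identifications~\eqref{eq:endpoints} joining $e$ to $e'$, interleaved with the trivial moves $\lambda_x\to\lambda_x$, $\rho_x\to\rho_x$ that pass between different occurrences of the same variable. There are three steps to handle.
\begin{itemize}[leftmargin=*,itemsep=2pt]
\item \emph{Two occurrences of the same variable.} Suppose $\alpha$ ends with $x$ in a monomial $ux v$, and $\gamma$ is the right context $q$ of $x$ in another monomial $pxq$, so that $\alpha=ux$ and $\gamma=q$. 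Then \eqref{eq:M}, with $u,v,p,q$ substituted by words or omitted, yields $uxq$. The left-hand symbol $\lambda_x$ is handled by the same identity.
\item \emph{One identification inside a single monomial.} Here the cut is literally shared, so the claim is immediate.
\item \emph{Transitivity.} Suppose $e\sim e'\sim e''$, the monomials $\alpha\beta'$ and $\alpha'\gamma$ are derivable by induction, and $\alpha'\beta'$ is the monomial witnessing $e'$. Then \eqref{eq:R}, with $u=\alpha$, $v=\beta'$, $p=\alpha'$, $q=\gamma$, gives $\alpha\gamma$.
\end{itemize}

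The main obstacle is that \eqref{eq:R} requires all four contexts to be nonempty. The chain may therefore pass through $s$ or $t$ via a cut at the very start or end of a monomial, where the relevant prefix or suffix is empty. I would treat this as the degenerate case. Suppose some internal cut, with a nonempty prefix, is equivalent to $s$. Following the chain, at the first place where an empty prefix meets a nonempty one we obtain two derivable monomials of the shape $xu$ and $vxz$ with $v$ nonempty, or the right-hand analogue $ux$ and $vxz$ with $z$ nonempty. Here contexts are omitted as the convention allows, and the monomials are derivable using the already proved nondegenerate part of the induction. Then \eqref{eq:L} or \eqref{eq:E} gives $P\approx0$. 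Otherwise, $s$ and $t$ are each equivalent only to cuts at the extreme ends of monomials, so every chain between internal cuts avoids empty contexts and \eqref{eq:R} applies. I expect organizing this case split cleanly, so that the induction order does not become circular, to take the most care.

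Assuming $P\not\approx0$ is not derived, take an $s$--$t$ walk with edges $y_1,\dots,y_k$. By induction on $j$ I would derive a monomial with prefix $y_1\cdots y_j$ whose cut after $y_j$ carries $\rho_{y_j}$. For the base case, $\lambda_{y_1}=s$; since every edge lies on some monomial, the claim applied to $s$ and $\lambda_{y_1}$, together with \eqref{eq:M} with the left context omitted, gives a monomial beginning with $y_1$. For the inductive step, $\rho_{y_j}=\lambda_{y_{j+1}}$, and a suffix beginning with $y_{j+1}$ is realized at $\lambda_{y_{j+1}}$, so the claim splices on $y_{j+1}$. At $j=k$ we have $\rho_{y_k}=t$, and the claim with the suffix realized at $t$ taken as the cut at the end of a monomial (again the degenerate analysis via \eqref{eq:E}) shows that $y_1\cdots y_k=q$ itself is derivable. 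This gives $\der{\C}{P\approx P+q}$.
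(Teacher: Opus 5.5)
Your proposal is correct and follows essentially the paper's route: boundary conflicts are removed by \eqref{eq:L} and \eqref{eq:E}, directly related cuts are spliced by \eqref{eq:M}, transitivity along a chain of cuts uses \eqref{eq:R} with all four contexts nonempty, and the walk is then assembled one letter at a time. The circularity you anticipate does not arise, because the first point where a chain of generating identifications passes from an empty to a nonempty prefix already exhibits two \emph{original} monomials of the forms $xu$ and $vxz$; this is why the paper checks all boundary conflicts up front, after which the classes of $s$ and $t$ contain only boundary cuts, and your base and final steps (some monomial starts with $y_1$, and $y_k$ is always last) follow directly.
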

\begin{proof}
We give the derivation in three steps.

\smallskip\noindent\emph{Step 1: boundary conflicts.}
Suppose a variable $x$ occurs first in one monomial and not first in another. The two monomials have the forms $xu$ and $vxz$, where $v$ is nonempty and $u,z$ may be empty. An instance of \eqref{eq:L} makes their sum zero, hence $\C\vdash P\approx0$. A variable occurring both last and not last is dealt with by \eqref{eq:E}. The two occurrences are allowed to belong to the same monomial, using additive idempotence.

We may therefore assume that neither conflict occurs. The class of $s$ consists only of $s$ and the left endpoints of variables occurring first. Indeed, to leave these symbols along a chain of the generating identifications, one would need an adjacency identification involving the left endpoint of a variable that also occurs first. That would give a nonfirst occurrence of this variable. Similarly, the class of $t$ consists only of $t$ and the right endpoints of variables occurring last. In particular, $s\ne t$, no edge enters $s$, and no edge leaves $t$.

\smallskip\noindent\emph{Step 2: splicing at an internal vertex.}
An internal cut of a monomial is a pair $(A,B)$ of nonempty words with $AB\in P$, together with its occurrence in that monomial. Two internal cuts represent the same vertex of $G(P)$ precisely when they can be joined by a chain of cuts in which consecutive cuts have either the same immediately preceding variable or the same immediately following variable. This follows from the definition of the endpoint identifications and the exclusion of boundary conflicts.

For two directly related cuts $(A,B)$ and $(C,D)$, \eqref{eq:M} adds both $AD$ and $CB$. If the common preceding variable is $x$, write the two words as $A'xB$ and $C'xD$; if the common following variable is $x$, write them as $AxB'$ and $CxD'$. The permitted omissions in \eqref{eq:M} cover empty $A',C',B',D'$.

Transitivity is supplied by \eqref{eq:R}. More explicitly, if cuts indexed by $i,j,k$ have prefixes $A_i,A_j,A_k$ and suffixes $B_i,B_j,B_k$, and the words $A_iB_j$ and $A_jB_k$ have already been added, then the original word $A_jB_j$ and the instance
\[
A_iB_j+A_jB_j+A_jB_k
\approx A_iB_j+A_jB_j+A_jB_k+A_iB_k
\]
add $A_iB_k$. All four substituted contexts are nonempty. Induction along a finite chain of cuts shows that for any two internal cuts representing the same vertex, the prefix of the first can be joined to the suffix of the second.

Every word added in this step is a walk in the original graph. Its endpoint identifications are therefore redundant, so adjoining it does not change $G(P)$. The argument can consequently be used again after any finite number of such additions.

\smallskip\noindent\emph{Step 3: an arbitrary walk.}
Let $q=x_1\cdots x_r$ label an $s$--$t$ walk. By Step~1, some original monomial starts with $x_1$. Choose such a monomial. Suppose a word beginning with $x_1\cdots x_i$ has been obtained, with $i<r$. Its suffix after this prefix is nonempty: otherwise $\rho_{x_i}=t$, although the target walk continues from that vertex, contradicting the absence of edges leaving $t$.

Choose an occurrence of $x_{i+1}$ in an original monomial. It is not first, since its left endpoint is the current internal vertex. Its preceding cut and the cut after the constructed prefix represent the same vertex. Step~2 joins the constructed prefix to the suffix beginning with this occurrence of $x_{i+1}$. Continue inductively. Finally $\rho_{x_r}=t$, so every occurrence of $x_r$ in the original polynomial is last, again by Step~1. The final constructed word is exactly $q$.
\end{proof}

\begin{corollary}\label{cor:zero-derived}
If $A_m\models P\approx0$, then $\B_m\vdash P\approx0$.
\end{corollary}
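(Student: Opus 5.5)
By Corollary~\ref{cor:zero-criterion}, if $A_m\models P\approx0$ then $G(P)$ either has a directed cycle or has an $s$--$t$ path of length greater than $m$. The plan is to turn each of these into a derivation from $\B_m$, using the splicing lemma to produce a monomial that is already zero. We first apply Lemma~\ref{lem:splicing}. Either $\C\vdash P\approx0$ directly, and $\C\subseteq\B_m$, or $P\approx P+q$ is derivable for every nonempty $s$--$t$ walk label $q$. It then suffices to find one walk $q$ with $\B_m\vdash q\approx0$. Indeed, $P\approx P+q\approx P+0\approx0$ by \eqref{eq:Z}.

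In the case of a long path, let the $s$--$t$ path have length $r\ge m+1$ with edge labels $x_1,\dots,x_r$. It is itself a walk, so $P\approx P+x_1\cdots x_r$. Substitute $x_1,\dots,x_m,\ x_{m+1}\cdots x_r$ for the variables of \eqref{eq:N}; this gives $x_1\cdots x_r\approx0$.

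In the case of a directed cycle, every vertex and edge lies on some $s$--$t$ walk. Choose a vertex $v$ on the cycle, a walk from $s$ to $v$ with label $a$, and a walk from $v$ to $t$ with label $b$, where $a$ or $b$ may be empty. Let $c$ be the nonempty label of the cycle. For every $k\ge1$, the word $a\,c^k\,b$ labels an $s$--$t$ walk, and it is nonempty. Take $k=m+1$. Then $ac^{m+1}b$ is a product of at least $m+1$ nonempty factors $c,\dots,c$, with $a$ and $b$ absorbed into the first and last factor. Hence \eqref{eq:N} gives $ac^{m+1}b\approx0$, and $P\approx0$ follows as before.

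The main obstacle is to confirm that the splicing lemma really covers walks that repeat vertices and edges, such as $ac^{m+1}b$. The lemma is stated for arbitrary walks, but Step~1 of its proof shows that no edge enters $s$ and none leaves $t$, and we need the cycle to avoid $s$ and $t$. That holds automatically under those conclusions. In the degenerate event that the lemma's first alternative occurs, we are done anyway, so no further case analysis is needed.
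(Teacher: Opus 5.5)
Your proposal is correct and follows essentially the same route as the paper. Boundary conflicts are disposed of via Step~1 of Lemma~\ref{lem:splicing}. Otherwise Corollary~\ref{cor:zero-criterion} gives an $s$--$t$ walk of length at least $m+1$, pumping a cycle if necessary. Splicing adds that walk, a grouped substitution into \eqref{eq:N} makes it zero, and \eqref{eq:Z} finishes. Your closing worry is unnecessary: Lemma~\ref{lem:splicing} already covers arbitrary $s$--$t$ walks, and once boundary conflicts are excluded no cycle can pass through $s$ or $t$ anyway.
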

\begin{proof}
If Step~1 of Lemma~\ref{lem:splicing} applies, the conclusion follows there. Otherwise, by Corollary~\ref{cor:zero-criterion}, the graph has an $s$--$t$ walk of length at least $m+1$. In the cyclic case, such walks are obtained by traversing a cycle sufficiently many times; every vertex is reachable from $s$ and can reach $t$. The splicing lemma adds the corresponding word. Identity~\eqref{eq:N} makes that word zero, and \eqref{eq:Z} gives $P\approx0$.
\end{proof}

\section{Completeness for finite interval semirings}\label{sec:finite}

\subsection{Equalities forced by a finite chain}

Let $G$ be a finite acyclic directed graph with distinguished vertices $s,t$ such that every vertex lies on an $s$--$t$ path. Suppose its maximum $s$--$t$ path length is at most $m$. Define
\[
r(v)=\max\{|p|:p\text{ is an }s\text{--}v\text{ path}\},\qquad
d(v)=\max\{|p|:p\text{ is a }v\text{--}t\text{ path}\}.
\]
Call $v$ \emph{critical} if $r(v)+d(v)=m$. Equivalently, $v$ lies on an $s$--$t$ path of length $m$. A map from $V(G)$ to $\{0,\ldots,m\}$ is \emph{admissible} if it is strictly increasing along every edge.

\begin{lemma}\label{lem:forced}
For $u,v\in V(G)$, every admissible map $f$ satisfies $f(u)=f(v)$ if and only if either $u=v$, or $u,v$ are critical and $r(u)=r(v)$.
\end{lemma}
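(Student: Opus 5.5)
We prove the two directions separately, working throughout with the functions $r$ and $d$. Two elementary bounds drive everything. Since $G$ is acyclic, admissibility forces $f$ to increase by at least one along each edge, so every admissible $f$ satisfies
\[
r(v)\le f(v)\le m-d(v).
\]
Indeed, following a longest $s$--$v$ path gives $f(v)\ge f(s)+r(v)\ge r(v)$, and following a longest $v$--$t$ path gives $f(v)\le f(t)-d(v)\le m-d(v)$. Both bounds are attained: $r$ itself is admissible, because $r(\text{head})\ge r(\text{tail})+1$ along each edge, and so is $m-d$. Admissibility of $r$ is exactly the argument in Corollary~\ref{cor:zero-criterion}, and its values lie in $\{0,\ldots,m\}$ because $r(v)\le r(v)+d(v)\le m$.

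For the ``if'' direction, suppose $u$ and $v$ are critical. Criticality gives $r(v)=m-d(v)$, so the two bounds coincide and every admissible $f$ satisfies $f(v)=r(v)$. Hence, if $r(u)=r(v)$, then $f(u)=f(v)$ for all admissible $f$. The case $u=v$ is trivial.

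For the ``only if'' direction, suppose $u\ne v$ and that the pair is not a pair of critical vertices with equal $r$. We must produce an admissible map separating them.
\begin{itemize}[leftmargin=*,itemsep=2pt]
\item If both vertices are critical with $r(u)\ne r(v)$, the map $r$ separates them.
\item Otherwise one of them, say $v$, is not critical, so $r(v)<m-d(v)$.
\begin{itemize}[leftmargin=*,itemsep=2pt]
\item If $r(u)\ne r(v)$, the map $r$ again separates them.
\item If $r(u)=r(v)$, consider the modified map
\[
f(w)=r(w)+1 \quad\text{for } w=v,\qquad f(w)=r(w)\quad\text{otherwise}.
\]
It stays in range since $r(v)+1\le m-d(v)\le m$. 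Edges into $v$ remain increasing. For an edge $v\to w$, we need $r(w)\ge r(v)+2$.
\end{itemize}
\end{itemize}

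This last requirement can fail, and it is the main obstacle. The fix is to raise $v$ together with all of its descendants whose slack allows it. Set
\[
f=\max\bigl(r,\;g\bigr),\qquad g(w)=r(v)+1+\mathrm{dist}_{\max}(v,w)\ \text{ for descendants } w \text{ of } v,
\]
where $\mathrm{dist}_{\max}(v,w)$ is the length of a longest $v$--$w$ path, and $g$ is undefined elsewhere. Three checks are needed.
\begin{enumerate}[leftmargin=*,itemsep=2pt]
\item \emph{Admissibility.} Both $r$ and $g$ increase along edges where they are defined, and the maximum of two edge-increasing functions is edge-increasing. For edges leaving the descendant set there is nothing to check, because every successor of a descendant is itself a descendant.
\item \emph{Range.} For a descendant $w$,
\[
g(w)\le r(v)+1+d(v)-d(w)\le m-d(w)\le m,
\]
using $\mathrm{dist}_{\max}(v,w)+d(w)\le d(v)$ and $r(v)+1+d(v)\le m$.
\item \emph{Separation.} We have $f(v)=r(v)+1$. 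The vertex $u$ is not a descendant of $v$, since $r(u)=r(v)$ rules out any path $v\to u$. Hence $f(u)=r(u)=r(v)\ne f(v)$.
\end{enumerate}
The delicate step is this construction: verifying that raising descendants stays within $\{0,\ldots,m\}$ is exactly where the non-criticality of $v$ is used.
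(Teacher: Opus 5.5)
Your proof is correct and follows the paper's argument. The squeeze $r(v)\le f(v)\le m-d(v)$ is a cleaner way of saying that a length-$m$ path forces $f=r$ on critical vertices, and your separating map $\max(r,g)$ on the descendants of the non-critical vertex is the paper's map with $u$ and $v$ swapped. The one check you leave implicit is an edge entering the descendant set from outside, and it is immediate because there $f(\text{tail})=r(\text{tail})<r(\text{head})\le f(\text{head})$.
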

\begin{proof}
An admissible map on a path of length $m$ must use, successively, all the values $0,\ldots,m$. Thus every critical vertex $v$ has $f(v)=r(v)$, proving one implication.

For the converse, $f(z)=r(z)$ is an admissible map. It separates $u$ and $v$ if their ranks differ. Suppose $u\ne v$ and $r(u)=r(v)=k$. Then neither vertex is reachable from the other. If one of them, say $u$, is not critical, then $k+d(u)\le m-1$.

For a vertex $z$ reachable from $u$, let $\ell(u,z)$ be the maximum length of a path from $u$ to $z$, including the length-zero path when $z=u$. Define
\begin{equation}\label{eq:separation}
f(z)=
\begin{cases}
\max\{r(z),k+1+\ell(u,z)\},&u\leadsto z,\\
r(z),&u\not\leadsto z.
\end{cases}
\end{equation}
This map takes values at most $m$, since $\ell(u,z)\le d(u)$. It is strictly increasing along every edge. If the initial vertex of an edge is reachable from $u$, both lower bounds in the maximum increase by at least one along that edge. If it is not reachable, the strict increase of $r$ suffices. Finally, $f(u)=k+1$ whereas $f(v)=k$. Thus $u,v$ cannot have equal values under every admissible map unless they are both critical.
\end{proof}

Write $u\sim_m v$ for the equivalence relation described in Lemma~\ref{lem:forced}.

\begin{lemma}\label{lem:mixing}
If $P$ is not identically zero in $A_m$, then there is a polynomial $P'$ such that
\[
\B_m\vdash P\approx P',\qquad G(P')\cong G(P)/{\sim_m},
\]
where the isomorphism preserves the labelled edges and the distinguished vertices.
\end{lemma}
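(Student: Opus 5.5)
The plan is to realize the identifications of $\sim_m$ one critical vertex at a time by adding words to $P$ with \eqref{eq:H}, and then to check that the added words impose no further identifications. Assume $A_m\not\models P\approx0$. By Corollary~\ref{cor:zero-criterion} and Step~1 of Lemma~\ref{lem:splicing}, there are no boundary conflicts, $G=G(P)$ is acyclic, and every $s$--$t$ path has length at most $m$. Every vertex lies on an $s$--$t$ path, so the setting of Lemma~\ref{lem:forced} applies. If no $s$--$t$ path has length $m$, there are no critical vertices, $\sim_m$ is equality, and we take $P'=P$. Otherwise we fix one $s$--$t$ path of length $m$ as a reference, with label $x_1\cdots x_m$; it is a path with $m$ edges, so its label is a word of $m$ variables. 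Let $c_k$ denote its vertex after $k$ edges. On any path of length $m$, the vertex after $k$ edges has rank exactly $k$, since $r(v)+d(v)\le m$ while $r(v)\ge k$ and $d(v)\ge m-k$. Hence the reference path contains precisely one critical vertex of each rank, namely $c_k$.

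For each critical vertex $v$, we choose an $s$--$t$ path of length $m$ through $v$, with label $y_1\cdots y_m$; then $v$ sits after $k=r(v)$ edges. By the splicing lemma (Lemma~\ref{lem:splicing}), $\B_m\vdash P\approx P+x_1\cdots x_m+y_1\cdots y_m$. An instance of \eqref{eq:H}, applied inside this sum via associativity of $+$, replaces $x_1\cdots x_m+y_1\cdots y_m$ by the full expansion of $(x_1+y_1)\cdots(x_m+y_m)$, which contains both original words. Hence every mixed word $z_1\cdots z_m$ with $z_i\in\{x_i,y_i\}$ can be added to $P$. In particular we add $w_v=x_1\cdots x_k\,y_{k+1}\cdots y_m$. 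Doing this for every critical $v$ and collecting the added words gives $P'$ with $\B_m\vdash P\approx P'$.

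It remains to identify $G(P')$, and this is where care is needed. The new identifications come only from the words $w_v$. The first and last letters of $w_v$ give $\lambda_{x_1}=s$ and $\rho_{y_m}=t$, which already hold in $G$. The internal adjacencies other than the junction $x_ky_{k+1}$ repeat adjacencies of the two original paths, so they also already hold. The junction gives $\rho_{x_k}=\lambda_{y_{k+1}}$, which merges $c_k$ with $v$ (for $k=0$ or $k=m$ the vertex $v$ is $s$ or $t$ and nothing new happens). Thus the generated equivalence on $V(G)$ merges each critical $v$ with $c_{r(v)}$. Any two critical vertices of equal rank are therefore merged, while only such pairs are merged. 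By Lemma~\ref{lem:forced}, this is exactly $\sim_m$.

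Consequently $G(P')$ is $G(P)/{\sim_m}$, with the same labelled edges and with the classes of $s$ and $t$ as distinguished vertices. The main point to verify carefully is that the equivalence generated by the adjacency identifications of $P'$ is precisely the join of the old one with these junction merges. This holds because the endpoint relation in Definition~\ref{def:graph} is generated monomial by monomial, so adjoining $w_v$ contributes only its own adjacency identifications. One should also check that $s$ and $t$ remain distinct and unmerged with internal vertices. This holds because $\sim_m$ never relates vertices of different rank, and $s$ and $t$ are the unique vertices of ranks $0$ and $m$.
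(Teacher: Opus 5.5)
Your proof is correct and follows the paper's argument. Splicing adds the length-$m$ path words, \eqref{eq:H} yields mixed words, the word taking the first $k$ edges of one path and the last $m-k$ of another merges two critical vertices of rank $k$, and positions on length-$m$ paths equal ranks. You differ only in fixing one reference path and keeping just the single-junction words $w_v$ (extracting one summand of the expansion by idempotence), whereas the paper keeps all pairwise expansions and so must observe that every mixed word identifies only critical vertices of equal rank.
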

\begin{proof}
The graph $G(P)$ is acyclic and has maximum path length at most $m$. By Lemma~\ref{lem:splicing}, add to $P$ all monomials labelling its $s$--$t$ paths of length $m$. There are finitely many, and their addition does not change the graph. If there are none, take $P'=P$.

Otherwise apply \eqref{eq:H} to each pair of these monomials and expand the product of sums. Let $P'$ be the resulting polynomial, including all previous monomials. These are finite equational derivations.

Every edge in a length-$m$ path joins critical vertices of successive ranks. Each mixed word therefore imposes only equalities between critical vertices of the same rank. Conversely, let $u,v$ be critical vertices of rank $k$, with $0<k<m$. Choose length-$m$ paths through them. The mixed word consisting of the first $k$ edges of the first path followed by the final $m-k$ edges of the second path is among the added words. Its middle adjacency identifies $u$ and $v$. At ranks $0$ and $m$, the only vertices are $s$ and $t$. Hence the newly imposed equivalence relation is exactly $\sim_m$.
\end{proof}

\subsection{Derivation of all identities}

\begin{proof}[Proof of Theorem~\ref{thm:bases} for $A_m$]
Validity is Lemma~\ref{lem:sound}. Consider an identity $P\approx P+q$ valid in $A_m$, where $q$ is a word. If $P$ is identically zero, Corollary~\ref{cor:zero-derived} proves the identity from $\B_m$.

Suppose $P$ is not identically zero. Then $\supp(q)\subseteq\supp(P)$. Otherwise keep a nonzero evaluation of $P$ and assign zero to a variable occurring in $q$ but not in $P$; this contradicts the identity.

Write $q=x_1\cdots x_r$. Under every nonzero evaluation of $P$, the word $q$ must have the same nonzero value as $P$. By Lemma~\ref{lem:evaluation}, this is equivalent to requiring, under every admissible endpoint map,
\begin{equation}\label{eq:qconditions}
f(\lambda_{x_1})=f(s),\qquad
f(\rho_{x_i})=f(\lambda_{x_{i+1}})\ (1\le i<r),\qquad
f(\rho_{x_r})=f(t).
\end{equation}
Lemma~\ref{lem:forced} identifies each required equality with a pair in $\sim_m$. By Lemma~\ref{lem:mixing}, all these pairs have become actual vertex equalities in $G(P')$, for a polynomial $P'$ satisfying $\B_m\vdash P\approx P'$. Thus $q$ labels an $s$--$t$ walk of $G(P')$.

The splicing lemma applied to $P'$ yields $\C\vdash P'\approx P'+q$, unless it yields $P'\approx0$. The latter is impossible, since all the preceding identities are valid in $A_m$ and $P$ has a nonzero evaluation. Consequently $\B_m\vdash P\approx P+q$.

Identities with one side zero are covered by Corollary~\ref{cor:zero-derived}. Decomposing any remaining identity into absorption identities completes the proof.
\end{proof}

\section{The unbounded interval semiring}\label{sec:infinite}

\begin{proof}[Proof of Theorem~\ref{thm:bases} for $\Ainf$]
Again validity follows from Lemma~\ref{lem:sound}. Suppose first that $\Ainf\models P\approx0$. By Corollary~\ref{cor:zero-criterion}, $G(P)$ has a directed cycle. A boundary conflict gives $\C\vdash P\approx0$ immediately. Otherwise choose an edge labelled $x$ on a cycle. There is an $s$--$t$ walk traversing this edge at least twice, since every vertex lies on an $s$--$t$ walk. Its label has a factor $xUx$, where $U$ may be empty. The splicing lemma adds this word to $P$. If $U=\eps$, use $x^2\approx0$; if $U\ne\eps$, substitute $U$ for $y$ in $xyx\approx0$. In either case the entire added monomial is zero, and $\B_\infty\vdash P\approx0$.

Now suppose $P\approx P+q$ is valid and $P$ is not identically zero. As before, every variable of $q$ occurs in $P$. The graph $G(P)$ is finite and acyclic, so it has an injective topological numbering by nonnegative integers. This numbering is an admissible endpoint map for $\Ainf$ and separates every pair of distinct vertices. Therefore the equalities~\eqref{eq:qconditions}, which must hold under this map, can only relate vertices that are already equal in $G(P)$. It follows that $q$ labels an $s$--$t$ walk in $G(P)$. Lemma~\ref{lem:splicing} derives $P\approx P+q$ from $\C$. The alternative $P\approx0$ is excluded by validity and the chosen nonzero evaluation.

These two cases derive all identities of $\Ainf$ from $\B_\infty$.
\end{proof}

\begin{remark}
The identities $x^2\approx0$ and $xyx\approx0$ do not bound the length of nonzero products of distinct elements. In particular, $\Ainf$ is not nilpotent. The role of \eqref{eq:J} is to eliminate repeated edges in a cycle, not to impose a uniform bound on path length.
\end{remark}

\section{Removing the constant and treating word sets}\label{sec:zero}

\subsection{The original binary signature}

\begin{proposition}\label{prop:constant}
Each of the algebras $A_m$ and $\Ainf$ has a finite identity basis in the signature $(+,\cdot)$.
\end{proposition}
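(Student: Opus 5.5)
The plan is to translate the bases $\B_m$ and $\B_\infty$ from the signature $(+,\cdot,0)$ into the signature $(+,\cdot)$. The constant $0$ is not needed as a primitive symbol. It can be expressed by any term that is identically zero, for instance $x_1\cdots x_{m+1}$ in $A_m$ and $x^2$ in $\Ainf$. It can also be handled by rephrasing each identity $P\approx0$ as a pair of absorption identities.

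Concretely, every identity of the form $P\approx0$ in the bases (\eqref{eq:L}, \eqref{eq:E}, \eqref{eq:N}, \eqref{eq:J}) is replaced by the two identities
\[
P\approx P+y,\qquad P\approx Py\approx yP,
\]
where $y$ is a fresh variable. Let $\B'_m$ and $\B'_\infty$ be the sets of $0$-free identities obtained this way. Each also contains the ai-semiring axioms, $\C$ without \eqref{eq:Z}, and \eqref{eq:H}. These identities are valid: if $P$ is identically zero, then $P+y$, $Py$ and $yP$ are identically zero as well. Since these identities say that the value of $P$ is an additive and multiplicative absorbing element, they are exactly the $0$-free surrogates of the statement ``$P=0$''.

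For completeness, take a $0$-free identity $U\approx V$ valid in $A_m$, or in $\Ainf$. By Theorem~\ref{thm:bases} there is a derivation from $\B_m$ in the expanded signature. Fix a term $Z$ in $U\approx V$'s variables, or in one fresh variable, such as $Z=z^{m+1}$ or $Z=z^2$. Using $\B'$ we will prove that $Z$ behaves like $0$: it satisfies $Z+x\approx Z$, $Zx\approx xZ\approx Z$, and $Z\approx Z'$ for any other zero term $Z'$.

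Two equally good ways to finish present themselves. The first is to substitute $Z$ for $0$ throughout the derivation. Each step that used an identity $P\approx0$ becomes $P\approx Z$, which is derived from $P\approx P+Z$ together with $Z\approx Z+P$ (because $Z$ absorbs). Each step that used \eqref{eq:Z} becomes an absorption law for $Z$. The ai-axioms are unchanged.

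The second and cleaner way is to redo the completeness argument directly. Sections~\ref{sec:splicing}--\ref{sec:infinite} really only need two operations: deriving $P\approx P+q$, and, in the zero case, deriving an identity between two identically zero polynomials. The first uses only $\C$ and \eqref{eq:H}. For the second, if $P$ and $Q$ are both identically zero, the proofs of Corollary~\ref{cor:zero-derived} and of the $\Ainf$ case produce a monomial $w\in P$, derivably added, that contains a zero factor by \eqref{eq:N} or \eqref{eq:J}, or that collides via \eqref{eq:L}/\eqref{eq:E}. The surrogate identities then give $P\approx P+y$ for arbitrary $y$, so $P\approx P+Q$, and symmetrically $Q\approx Q+P$, hence $P\approx Q$.

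The main obstacle is this zero case. Multiplicative absorption must propagate from a subword: if a factor $u$ of a monomial $w=aub$ satisfies $u\approx u+y$ and $u\approx uy\approx yu$, we need $P\approx P+y$. To get it, we take $y:=$ the appropriate product: $aub\approx a(uy')b$ with $y'$ chosen so that $a u y' b$ can be rewritten as $u$ times something. This uses $uy\approx u\approx yu$ to absorb $a$ and $b$, giving $aub\approx u$ when $a,b$ are nonempty, by $u\approx a u$ read as $yu\approx u$ with $y=a$. Then $P=w+R\approx u+R\approx u+y+R$, and absorption of the extra summands into $u$ returns $P+y$.

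The \eqref{eq:L}/\eqref{eq:E} collisions involve sums rather than single monomials. For them we take the additive surrogate $xu+vxz\approx xu+vxz+y$ together with the multiplicative ones, and check that sums with this property are closed under multiplication and addition, using distributivity. This gives the needed closure lemma, after which completeness follows as above.
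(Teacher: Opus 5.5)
Your proposal is correct, but it takes a different route from the paper. The paper replaces $0$ in $\B_m$ by $z^{m+1}$ (by $z^2$ for $\B_\infty$), so the translation of \eqref{eq:N} reads $x_1\cdots x_{m+1}\approx z^{m+1}$ and forces $z^{m+1}$ to be constant. The translated \eqref{eq:Z} then make this constant absorbing. Every model of the translated identities therefore expands to a model of $\B_m$, hence lies in $\V(A_m)$, and the binary identities follow semantically without touching any derivation.

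You instead replace each $P\approx0$ by the statement that $P$ is additively and multiplicatively absorbing, and you drop \eqref{eq:Z}. Constancy then comes from uniqueness of an additively absorbing element ($a=a+b=b+a=b$). The absorption laws for $Z=z^{m+1}$ (or $z^2$) are instances of the surrogates of \eqref{eq:N} (or of $x^2\approx0$).

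Your first finish translates a $\B_m$-derivation by $0\mapsto Z$ with $z$ fresh, turning each use of $P\approx0$ into $P\approx P+Z\approx Z+P\approx Z$. It is the proof-theoretic counterpart of the paper's expansion argument and is complete as sketched. It buys an explicit translation of derivations, at the cost of bookkeeping about substitutions. The paper's semantic argument is shorter, and it would apply equally well to your $\B'_m$: the surrogates yield a constant absorbing element, and then each $P\approx P+y$ gives $P=P+0=0$.

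Your second finish also works, but it is more laborious than necessary. The propagation step for a zero factor inside a monomial is needed only for \eqref{eq:J}, since a word of length at least $m+1$ is directly an instance of $x_1\cdots x_{m+1}$. The closure lemma for \eqref{eq:L} and \eqref{eq:E} is superfluous, because a boundary conflict occurs between summands of $P$ itself, and the additive surrogate applies to them directly. A minor slip: $\B'_\infty$ should not contain \eqref{eq:H}.
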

\begin{proof}
For finite $m$, replace every occurrence of the constant $0$ in $\B_m$ by $z^{m+1}$, using a fresh variable $z$ in each identity containing the constant (first renaming an existing variable called $z$, if necessary). Keep all identities without $0$. In particular, the translation of \eqref{eq:N} is
\[
x_1\cdots x_{m+1}\approx z^{m+1}.
\]
In any nonempty algebra satisfying these identities, the value of $z^{m+1}$ is independent of $z$: substitute the same variable for all $x_i$ and vary the two assignments. Denote the common value by $0$. The translations of \eqref{eq:Z} make it an absorbing element for both operations. Every translated identity is then precisely the corresponding identity in $\B_m$ under this interpretation. Thus the algebra has an expansion satisfying $\B_m$.

The translated identities hold in the binary reduct of $A_m$. Conversely, by Theorem~\ref{thm:bases}, every algebra satisfying the translated identities has an expansion in $\V(A_m)$ in the expanded signature. Its binary reduct therefore satisfies every binary identity of $A_m$. This proves the assertion for $A_m$.

For $\Ainf$, replace $0$ by a fresh square $z^2$. The translation of $x^2\approx0$ is $x^2\approx z^2$, which supplies the common value. The same argument applies to $\B_\infty$.
\end{proof}

\begin{lemma}\label{lem:join}
For every nonempty set $W$ of nonempty words,
\[
\V(S(W))=\bigvee_{w\in W}\V(S(w)).
\]
\end{lemma}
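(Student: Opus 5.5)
We must show $\V(S(W))=\bigvee_{w\in W}\V(S(w))$, i.e.\ that $S(W)$ and the family $\{S(w):w\in W\}$ satisfy the same identities. Equivalently, an identity $P\approx Q$ holds in $S(W)$ if and only if it holds in every $S(w)$.

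\textbf{The inclusion $\supseteq$.} For $w\in W$, the factors of $w$ form a subset $F(w)\subseteq F(W)$. First I will show that $F(w)\cup\{0\}$ is a subalgebra of $S(W)$. A product $uv$ of factors of $w$ is nonzero in $S(W)$ only when $uv\in F(W)$, but $uv$ need not be a factor of $w$. So I will not try to embed $S(w)$ directly. Instead I will use the map $\pi_w:S(W)\to S(w)$ that is the identity on $F(w)$ and sends everything else to $0$. Products and sums within $F(w)$ agree in the two algebras except where $uv\in F(W)\setminus F(w)$; there $S(w)$ gives $0$ while $S(W)$ gives $uv$, which $\pi_w$ also sends to $0$. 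Checking that $\pi_w$ is a homomorphism:
\begin{itemize}
\item For sums, the flat operation gives $\pi_w(u+v)=\pi_w(u)+\pi_w(v)$ whenever both $u,v\in F(w)$. If exactly one lies outside $F(w)$, then $u\ne v$, so the left side is $0$. The right side is $\pi_w(u)+0=0$, because $0$ is absorbing for $+$.
\item For products, if $u$ or $v$ lies outside $F(w)$, then $uv$ is either $0$ or a word having that factor, so $uv\notin F(w)$ because $F(w)$ is factor-closed. Both sides are then $0$.
\end{itemize}
Thus $\pi_w$ is a surjective homomorphism, so $S(w)\in\mathrm{H}(S(W))$. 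This gives $\V(S(w))\subseteq\V(S(W))$.

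\textbf{The inclusion $\subseteq$.} It suffices to embed $S(W)$ into a quotient of a product of the $S(w)$, or to argue directly with identities. I plan to use identities. Suppose $P\approx Q$ holds in every $S(w)$, and take an evaluation $\varphi$ into $S(W)$ with $\varphi(P)\ne\varphi(Q)$. Modulo the ai-semiring axioms, $P$ and $Q$ are sums of monomials. In a flat semiring, a sum is nonzero exactly when all its summands are equal and nonzero. Without loss of generality $\varphi(P)=c\ne0$. Then every monomial of $P$ evaluates to $c\in F(W)$, so $c\in F(w)$ for some $w\in W$. All variables of $P$ are assigned factors of $c$, hence factors of $w$.

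Now I define an evaluation $\psi$ into $S(w)$. It agrees with $\varphi$ on the variables whose values lie in $F(w)$ and sends the remaining variables to $0$. Applying the homomorphism $\pi_w$, we get $\psi=\pi_w\circ\varphi$, hence $\psi(P)=\pi_w(c)=c$ and $\psi(Q)=\pi_w(\varphi(Q))$. Since the identity holds in $S(w)$, we have $\pi_w(\varphi(Q))=c\ne0$, and therefore $\varphi(Q)=c$, because $\pi_w$ is injective on nonzero preimages in $F(w)$ and sends all other elements to $0$. This contradicts $\varphi(P)\ne\varphi(Q)$.

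\textbf{The main point to check.} The key step is that $\pi_w$ is a homomorphism. The subtle case is a sum $u+v$ with $u\ne v$ in which only one of them lies in $F(w)$. This case relies on $0$ being additively absorbing, which the flat structure guarantees. Factor-closure of $F(w)$ handles products.
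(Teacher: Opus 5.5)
Your proof is correct and takes essentially the paper's route. You use the same projections $\pi_w$ to get $S(w)\in\mathrm{H}(S(W))$, and your identity argument for $\subseteq$ is the pointwise form of the paper's observation that the $\pi_w$ separate points, which makes $S(W)$ a subdirect product of the $S(w)$. In the additive check, also record the trivial case where both summands lie outside $F(w)$; there both sides are $0$.
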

\begin{proof}
For $w\in W$, the map $\pi_w:S(W)\to S(w)$ retains each factor of $w$ and sends every other element to zero. It is a surjective homomorphism. For multiplication, if a concatenation is a factor of $w$, then each of its two factors is a factor of $w$. For addition, distinct retained elements remain distinct, and all other cases follow from zero absorption.

The family $(\pi_w)_{w\in W}$ separates distinct elements. Given a nonzero factor $u$, choose $w$ containing it. Its image under $\pi_w$ is nonzero and differs from the image of any other element, which is either a different factor or zero. Thus $S(W)$ is a subdirect product of the $S(w)$, and the asserted equality follows. This is also the decomposition used in~\cite[Proposition~3.12]{GRZ2026}.
\end{proof}

\begin{proof}[Proof of Theorem~\ref{thm:main} and \eqref{eq:classification}]
If $w$ is linear, renaming its letters gives $S(w)\cong A_{|w|}$. For $k\le m$, $A_k$ is both a subsemiring of $A_m$ and a homomorphic image of $A_m$: the latter map retains intervals contained in $[0,k]$ and sends the others to zero.

If the lengths in $W$ are bounded and their maximum is $m$, Lemma~\ref{lem:join} therefore gives $\V(S(W))=\V(A_m)$.

If the lengths are unbounded, the same lemma gives
\[
\V(S(W))=\bigvee_{m\ge1}\V(A_m).
\]
Every $A_m$ is a subsemiring of $\Ainf$, and every finite set of elements of $\Ainf$ is contained in some $A_m$. Hence an identity holds in $\Ainf$ exactly when it holds in every $A_m$. The join above is consequently $\V(\Ainf)$. Proposition~\ref{prop:constant} now proves finite basability in both cases.
\end{proof}

\begin{corollary}\label{cor:counterexample}
The semiring $S(abcd)$, where $a,b,c,d$ are distinct, is an eleven-element noncommutative finitely based flat semiring. Hence Conjectures~6.1 and~6.2 of~\cite{GRZ2026} are false.
\end{corollary}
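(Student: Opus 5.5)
The corollary follows almost directly from what has been proved; the plan is to check each claimed property of $S(abcd)$ in turn and then compare with the conjectures.

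\emph{Size.} The nonempty factors of $abcd$ are the four letters, the three factors of length two, the two of length three, and $abcd$ itself. That gives $4+3+2+1=10$ factors. Adding the zero gives eleven elements, in agreement with $|A_4|=1+\binom{5}{2}=11$.

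\emph{Flatness and noncommutativity.} Flatness holds by the definition of $S(W)$. For noncommutativity we exhibit one pair: $a\cdot b=ab\neq0$, whereas $b\cdot a=0$ because $ba$ is not a factor of $abcd$.

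\emph{Finite basis.} The word $abcd$ is linear. Renaming its letters gives $S(abcd)\cong A_4$, so Theorem~\ref{thm:main} applies with $W=\{abcd\}$. Equivalently, Proposition~\ref{prop:constant} with $m=4$ shows that $S(abcd)$ is finitely based in the binary signature $(+,\cdot)$. An explicit basis is obtained from $\B_4$ by replacing $0$ with $z^5$.

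\emph{Refuting the conjectures.} Conjecture~6.1 of~\cite{GRZ2026} concerns finite nonempty $W$. It asserts that $S(W)$ is finitely based only if every word in $W$ has length at most three. Our $W=\{abcd\}$ is finite and contains a word of length four, yet $S(W)$ is finitely based, so this direction of the conjecture fails. Conjecture~6.2 makes the same claim for arbitrary nonempty $W$. Since $\{abcd\}$ is in particular an arbitrary nonempty set, the same example refutes it.

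There is no real obstacle. The only point to check is that the conjectures are genuine ``if and only if'' statements, so that a finitely based example with a length-four word contradicts the ``only if'' direction. This matches how they are stated in the introduction: $S(W)$ is finitely based \emph{exactly} when all words have length at most three.
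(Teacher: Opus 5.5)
Your proposal is correct and matches the paper's proof: you count the ten nonzero factors plus zero, exhibit $a\cdot b=ab\neq0=b\cdot a$, apply Theorem~\ref{thm:main} to the linear word $abcd$ (equivalently, $A_4$), and observe that a finitely based $S(W)$ containing a word of length four contradicts the necessary condition asserted by both conjectures.
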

\begin{proof}
Apply Theorem~\ref{thm:main} to $W=\{abcd\}$. Its ten nonzero factors are
\[
a,b,c,d,\ ab,bc,cd,\ abc,bcd,\ abcd.
\]
Moreover, $a\cdot b=ab\ne0=b\cdot a$. The word has length four, so it contradicts each conjectured necessary condition.
\end{proof}

\begin{corollary}
The varieties $\V(A_m)$ form a strictly increasing chain of finitely based varieties, and their join $\V(\Ainf)$ is also finitely based.
\end{corollary}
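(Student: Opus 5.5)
The corollary has three parts: the varieties $\V(A_m)$ are finitely based, the chain is strictly increasing, and the join $\V(\Ainf)$ is finitely based. I will take them in that order.

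Finite basability of each $\V(A_m)$ and of $\V(\Ainf)$ is already Proposition~\ref{prop:constant}. In the proof of Theorem~\ref{thm:main} we saw that $\V(\Ainf)=\bigvee_{m\ge1}\V(A_m)$. The same proof shows that $A_k$ is a subsemiring of $A_m$ for $k\le m$, so $\V(A_k)\subseteq\V(A_m)$ and the chain is weakly increasing. What remains is strictness: for each $m$ we need a binary identity that holds in $A_m$ but fails in $A_{m+1}$.

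For strictness I would use a binary analogue of \eqref{eq:N}:
\[
x_1\cdots x_{m+1}\approx z^{m+1}.
\]
In $A_m$ both sides are always $0$. On the left, a nonzero product of $m+1$ factors would need interval length at least $m+1$. On the right, $z^2$ is already zero, because a nonzero product $[i,j][i,j]$ would need $j=i$; for $m\ge1$ we have $m+1\ge2$, so $z^{m+1}=0$. In $A_{m+1}$, assign $x_i\mapsto[i-1,i]$. Then the left side evaluates to $[0,m+1]\ne0$, while the right side is still $0$ for every value of $z$. So the identity holds in $A_m$ but not in $A_{m+1}$, and $\V(A_m)\subsetneq\V(A_{m+1})$.

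If one prefers to stay in the signature with $0$, the argument is simpler: $A_m\models\eqref{eq:N}$, and the same assignment shows that $A_{m+1}$ violates it. Combining strictness with Proposition~\ref{prop:constant} and the join computation proves the corollary. There is no real obstacle here. The only point needing care is that the witness identity must be expressed in the binary signature, and the fact that $z^2$ vanishes in every $A_m$ handles this.
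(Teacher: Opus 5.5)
Your proposal is correct and follows essentially the same route as the paper: weak inclusions from the interval embeddings $A_k\le A_m$, strictness from the binary identity $x_1\cdots x_{m+1}\approx z^{m+1}$ (valid in $A_m$, refuted in $A_{m+1}$ by $x_i\mapsto[i-1,i]$), and finite basability plus the join $\V(\Ainf)=\bigvee_m\V(A_m)$ taken from Proposition~\ref{prop:constant} and the proof of Theorem~\ref{thm:main}. Where the paper simply chooses $z\mapsto 0$, you check that $z^2=0$ for every value of $z$, which is a slightly fuller verification of the same step.
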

\begin{proof}
The inclusions follow from the interval embeddings. The binary identity
\[
x_1\cdots x_{m+1}\approx z^{m+1}
\]
holds in $A_m$ but fails in $A_{m+1}$: assign $x_i\mapsto[i-1,i]$ and $z\mapsto0$. The assertions about bases and the join have already been proved.
\end{proof}

\section{Scope of the linear-word hypothesis}

The following simple identity identifies the class of word semirings to which Theorem~\ref{thm:main} applies.

\begin{proposition}\label{prop:linear-characterization}
For every nonempty set $W$ of nonempty words,
\[
S(W)\models xy+zx\approx0
\quad\Longleftrightarrow\quad
\text{every word in }W\text{ is linear}.
\]
Here the displayed identity is read in the natural expansion by the absorbing zero of $S(W)$.
\end{proposition}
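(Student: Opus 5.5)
We prove the two implications separately, working directly with the definition of $S(W)$ and its flat addition. Recall that in a flat semiring a sum is nonzero exactly when all summands are equal and nonzero. So $S(W)\models xy+zx\approx0$ says precisely this: there are no elements $x,y,z\in F(W)$ with $xy=zx\ne0$ in $S(W)$, that is, with $xy\in F(W)$ and $xy=zx$ as words.

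For the direction ($\Leftarrow$), assume every word in $W$ is linear. Suppose there were factors $x,y,z$ with $xy=zx=u\in F(W)$. Then $u$ is a factor of a linear word, hence itself linear. Since $z$ is nonempty, the factor $x$ occurs in $u$ both as a prefix and at position $|z|+1$, which is a different starting position. Since $x$ is nonempty, its first letter then occurs twice in $u$, contradicting linearity. Assignments where some variable is $0$ give $0$ on both sides trivially. Hence the identity holds.

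For the direction ($\Rightarrow$), argue by contraposition. Suppose some $w\in W$ contains a letter $a$ twice. Choose two occurrences of $a$ with no occurrence of $a$ strictly between them. Then $w$ has a factor $u=aVa$, where $V$ is a possibly empty word. We want $x,y,z$ with $xy=zx=u$.

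1. Take $x=a$, $y=Va$ and $z=aV$. All three are nonempty factors of $u$, hence lie in $F(W)$.
2. Then $xy=aVa=u$ and $zx=aVa=u$, and $u\in F(W)$, so both products equal $u\ne0$ in $S(W)$.
3. Flat addition gives $xy+zx=u+u=u\ne0$, so the identity fails.

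Neither direction presents a real obstacle. The only point needing care is reading the identity correctly in the zero-expanded signature: a zero assignment to any variable trivially gives $0$ on both sides, so only nonzero assignments matter. The minimal-gap choice of occurrences in step 1 is not actually needed, since any two occurrences of $a$ give such a factor $u$.
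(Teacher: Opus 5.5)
Your proof is correct and follows essentially the same route as the paper. For ($\Leftarrow$), the nonempty value of $x$ is both a prefix and a proper suffix of the common value, so its first letter repeats. For ($\Rightarrow$), you use the same witness $x\mapsto a$, $y\mapsto Va$, $z\mapsto aV$ on a factor $aVa$, and you rightly note that the minimal-gap choice of occurrences is unnecessary.
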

\begin{proof}
Suppose all words in $W$ are linear. If $xy+zx$ had a nonzero value, the two products would be equal to a common factor $v$. The nonempty word assigned to $x$ would be both a prefix and a suffix of $v$, and $|v|>|x|$. The first letter of this prefix would therefore occur in at least two positions of $v$, contrary to linearity.

Conversely, if a word in $W$ has a repeated letter $a$, it has a factor $aUa$ for some possibly empty word $U$. Assign $x\mapsto a$, $y\mapsto Ua$ and $z\mapsto aU$. All three assigned words are nonempty factors, including when $U=\eps$. Then $xy=zx=aUa\ne0$, so the identity fails.
\end{proof}

The identity in Proposition~\ref{prop:linear-characterization} is an instance of a boundary identity in $\C$. Consequently the full bases given here cannot be transferred unchanged to a word containing a repeated letter. The results do not classify the finite basis property of the remaining length-four patterns. They establish a family of finitely based word semirings of arbitrarily large nilpotency class, and show that word length alone cannot supply the proposed dichotomy.

\appendix
\section{Reproducible checks}

The proofs above do not rely on computation. The JavaScript supplement checks \eqref{eq:M}, \eqref{eq:R}, \eqref{eq:L} and \eqref{eq:E} on $A_m$ for $1\le m\le5$: 125 identity instances and 4,476,680 assignments. It compares the endpoint criterion with direct evaluations on 2,000 generated instances and 13,950 exhaustive two-letter instances for $A_4$. Syntactic saturation is checked on 3,429 further polynomials, as well as longer cut-chain and directed-chain examples.

A separate syntactic checker, using neither endpoint graphs nor semiring evaluations, verifies 1,214 finite-height certificates with 2,467 steps and eight unbounded certificates with 115 steps; it rejects four deliberately invalid certificates. Lemma~\ref{lem:forced} is additionally tested on 135 directed acyclic graphs with two to five vertices, by enumerating 3,913 admissible maps with bounds at most five. The supplementary README specifies the enumeration ranges, commands and certificate format. These finite checks can detect local errors but are neither a proof-assistant formalization nor substitutes for the general arguments.

\end{document}